\tikzstyle{invisivertex} = [black, shape=circle, minimum size=0pt, inner sep=0pt]
\newtheorem{theorem}{Theorem}
\newtheorem{lemma}[theorem]{Lemma}
\newtheorem{cor}[theorem]{Corollary}
\newtheorem{proposition}[theorem]{Proposition}
\theoremstyle{remark}
\newtheorem{rem}[theorem]{Remark}
\theoremstyle{definition}
\newtheorem{example}[theorem]{Example}
\newtheorem*{rep@theorem}{\rep@title}
\newcommand{\newreptheorem}[2]{%
\newenvironment{rep#1}[1]{%
 \def\rep@title{#2 \ref{##1}}%
 \begin{rep@theorem}}%
 {\end{rep@theorem}}}
\DeclareMathOperator{\dist}{dist}
\DeclareMathOperator{\az}{Az}
\DeclareMathOperator{\Z}{\mathbb{Z}}
\DeclareMathOperator{\K}{{\sf K}}
\DeclareMathOperator{\R}{{\sf R}}
\DeclareMathOperator{\PPP}{{\mathcal{P}}}
\newcommand{\weight}{\mathrm{weight}}
\newcommand{\len}{\mathrm{len}}
\title[A Three-regime Theorem for flow-firing]{A Three-regime Theorem for flow-firing}
\author[Brauner]{Sarah Brauner}
\email{sarah\_brauner@brown.edu}
\address{Brown University, Providence, RI, USA}
\author[Dorpalen-Barry]{Galen Dorpalen-Barry}
\email{dorpalen.barry@tamu.edu}
\address{Texas A\&M University, College Station, TX, USA}
\author[Kara]{Selvi Kara}
\email{skara@brynmawr.edu}
\address{Bryn Mawr College, Bryn Mawr, PA, USA}
\author[Klivans]{Caroline Klivans}
\email{caroline\_klivans@brown.edu}
\address{Brown University, Providence, RI, USA}
\author[Schneider]{Lisa Schneider}
\begin{document}

\begin{abstract}
    Graphical chip-firing is a discrete dynamical system where chips are placed on the vertices of a graph and exchanged via simple firing moves. Recent work has sought to generalize chip-firing on graphs to higher dimensions, wherein graphs are replaced by cellular complexes and chip-firing becomes flow-rerouting along the faces of the complex. Given such a system, it is natural to ask (1) whether this firing process terminates and (2) if it terminates uniquely (i.e. is confluent). In the graphical case, these questions were definitively answered by Bjorner--Lovasz--Shor, who developed three regimes which completely determine if a given system will terminate. Building on the work of Duval--Klivans--Martin and Felzenszwalb-Klivans, we answer these questions in a context called flow-firing, where the cellular complexes are 2-dimensional.
\end{abstract}
\maketitle
\section{Introduction}
This paper concerns {\bf flow-firing}, a higher-dimensional analogue of chip-firing.
In classical chip-firing  chips are placed on the vertices of a  graph and move to other vertices via local moves dictated by the graph Laplacian~\cite{klivans-book}. Flow-firing is a discrete model for cell complexes, where flow along one-dimensional edges is diverted over two-dimensional faces as dictated by the combinatorial Laplacian.  Higher dimensional chip-firing was introduced in \cite{criticalsimplicial} where the focus was on algebraic considerations. The study of the dynamical properties of flow-firing was initiated in \cite{flowfiringprocesses}, and is continued here. Consider the following two questions 
 concerning the fundamental behavior of  termination and confluence for flow-firing:

$\bullet$ For which initial configurations does the flow-firing process terminate?

$\bullet$ For which initial configurations does the flow-firing process terminate uniquely?

In the classical case of chip-firing on a graph the answer to the first question is provided by a theorem of Bj\"{o}rner, Lovasz and  Shor~\cite{bjorner-lovasz-shor}.   Their result identifies three regimes for chip-firing behavior.  Informally, the theorem states that: (1) If the number of chips is small enough then the process always terminates; (2) If the number of chips is large enough then the process never terminates; and
(3) If the number of chips is in a middle range, then one can always find a configuration which terminates and one that does not, see Theorem~\ref{BLS} for a precise statement.  

In the graphical case, if a configuration terminates it always does so uniquely, rendering the second question unnecessary.  In flow-firing, however, not all initial configurations terminate uniquely.  An important setting that terminates uniquely was identified in \cite{flowfiringprocesses}. Furthermore the authors conjectured a related much larger class of initial configurations would terminate uniquely.   Building from that work, our main result reveals a subtler behaviour than originally conjectured, resulting in a generalization of 
the Three-Regime Theorem for uniqueness of termination for flow-firing.  

Denote by $\K(n,r)$ the initial configuration known as the \emph{pulse} of height $n$ and radius $r$ (see Section~\ref{sec:background}).

\begin{theorem}[Three-Regime Theorem for Flow-Firing]\label{thm:3regimes}
Let $r,n\in\Z_{\geq 0}$.
\begin{enumerate}
\item\label{3regime1} If $r \leq 1$, then firing from $\K(n,r)$ terminates uniquely in the Aztec diamond.
\item\label{3regime2} If $2 \leq r\leq \lceil\frac{n}{2}\rceil$, then firing from $\K(n,r)$
\begin{enumerate}
    \item\label{3regime2-a} does not terminate uniquely, but
    \item\label{3regime2-b} can terminate in the Aztec diamond.
\end{enumerate}
\item\label{3regime3} If $r > \lceil\frac{n}{2}\rceil$, then
\begin{enumerate}
    \item\label{3regime3-a} $\K(n,r)$ does not terminate uniquely, and
    \item\label{3regime3-b} if $r \geq \lceil \frac{n}{\sqrt{3}} \rceil +1$, then firing from $\K(n,r)$ will never terminate in the Aztec diamond.
\end{enumerate}
\end{enumerate}
\end{theorem}
Questions of termination and uniqueness have also been considered in other variations of chip-firing.  Two notable examples are labeled chip-firing~\cite{hopkins-mcconville-propp, ckpl, ckpl2} and root-system chip-firing~\cite{roots1, roots2} where the emphasis is on local versus global confluence properties.  
We  prove confluence (unique termination) for subsystems of flow-firing including certain path-firings (Corollary \ref{lem:every-face-fires}).

The remainder of the paper proceeds as follows. Section \ref{sec:background} gives background on flow-firing. We introduce the notion of firing along a path and study its behavior in Section \ref{sec:pathFire}. In Sections \ref{sec:regimes}, \ref{sec:regime2} and \ref{sec:regime_3} we prove Theorem \ref{thm:3regimes}, addressing each of the three regimes in turn. We then conclude in Section \ref{sec:closing} by discussing challenges to improving the bounds of Theorem \ref{thm:3regimes}.

\subsection*{Acknowledgements}
This project was initiated as part of the ICERM Research Community in Algebraic Combinatorics in 2021/2022. We are thankful both to ICERM and the organizers (Pamela E. Harris, Susanna Fishel, Rosa Orellana, and Stephanie van Willigenburg) for their generous support. The first author was supported by an NSF Graduate Research Fellowship and NSF MSPRF DMS-2303060. All authors thank the anonymous reviewer for pointing out Proposition \ref{prop:discretederivative}, i.e. that the setup in Section 3 is equivalent to chip-firing on a line via the discrete derivative. 

\section{Background}\label{sec:background}

Following \cite{flowfiringprocesses}, we consider flow-firing on the two dimensional grid complex.  A {\bf flow configuration} is an assignment of integer values to the edges of the complex.  Visually, we depict flow as oriented, with positive values oriented to the right and up, and negative values oriented to the left and down.  For example, the flow configuration on the left in Figure~\ref{fig1} has flow value zero on all edges except the center edge, which has a value of $2$ and is depicted as a downward facing arrow and labeled with the magnitude $2$. 

The {\bf degree} of an edge is the number of two dimensional faces in which it is contained.  
An edge can {\bf fire} if it has at least as many units of flow as its degree.  In firing an edge, one unit of flow is diverted around each of the two dimensional cells containing it; an example is shown in Figure~\ref{fig1}.  In visualizations, an edge with $1$ unit of flow will have a directed arrow but we suppress the number $1$. 

    \begin{figure}[H]
    \centering
    \includegraphics[width=0.5\textwidth]{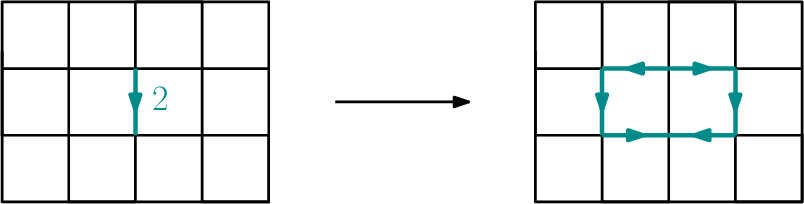}
    \caption{An example of flow-firing on the grid}
    \label{fig1}
\end{figure}

The flow-firing process starts with a fixed initial flow configuration. From all edges that can fire, one is selected and fired.  This continues until no firing moves are possible or continues forever.  A flow configuration is {\bf stable} if no flow-firing moves are possible. 
From a fixed initial configuration, we are interested in whether a stable configuration can be reached or not, and whether that stable configuration is unique. 
A  flow configuration is {\bf conservative} if for each vertex $v$, the flow into $v$ is equal to the flow out of $v$. 

\begin{theorem}[{{\cite[Proposition 2, Theorem 4]{flowfiringprocesses}}}]
Let $\K$ be a flow configuration.
\begin{itemize}
    \item If there is a vertex $v$ with
    \[
    | {\sf inflow}( v)  - {\sf outflow}( v)| > 4,
    \]
    then flow-firing never terminates.
    \item If $\K$ is conservative, then flow-firing terminates after a finite number of steps.
\end{itemize}
\end{theorem}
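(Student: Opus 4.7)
The plan is to isolate a single structural observation that powers both parts: every firing move preserves the quantity ${\sf inflow}(v) - {\sf outflow}(v)$ at every vertex $v$. Indeed, firing an edge $e$ modifies the flow by adding, with appropriate signs, one full cycle around each of the $\deg(e)$ two-dimensional cells containing $e$, and such a cycle enters and leaves each of its boundary vertices exactly once. Hence the net flow at every vertex is invariant under firing.

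For part~(1), I would combine this invariance with a sharp ceiling on divergence at any stable configuration. On the two-dimensional grid complex, every interior edge has degree $2$ and every boundary edge has degree $1$, so stability forces $|\K(e)| \leq 1$ on interior edges and $\K(e) = 0$ on boundary edges. Since each vertex is incident to at most four edges, any stable configuration satisfies $|{\sf inflow}(v) - {\sf outflow}(v)| \leq 4$ at every vertex. Invariance then forbids stabilization whenever some vertex starts with $|{\sf inflow}(v) - {\sf outflow}(v)| > 4$.

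For part~(2), I would exhibit a nonnegative integer monovariant that strictly decreases under every firing move. Since the grid complex is simply connected and $\K$ is conservative, $\K$ is a $1$-cycle and admits a finitely-supported integer $2$-chain primitive $h$ with $\K = \partial h$. In this language, an interior edge $e$ between faces $F_1$ and $F_2$ carries flow $\K(e) = h(F_1) - h(F_2)$ (for an appropriate sign convention), and firing $e$ corresponds to the update $h(F_1) \mapsto h(F_1) - 1$, $h(F_2) \mapsto h(F_2) + 1$, with $h \equiv 0$ on the exterior cell for boundary firings. Defining the potential $\Phi(h) := \sum_F h(F)^2$, a direct computation shows
\[
\Delta \Phi \;=\; -2\,\K(e) + 2 \;\leq\; -2
\]
for interior firings and $\Delta \Phi \leq -1$ for boundary firings. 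Since $\Phi$ takes nonnegative integer values and $\Phi(\K)$ is finite, the firing process must terminate in at most $\Phi(\K)$ steps.

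The main obstacle is setting up the $2$-chain primitive $h$ carefully. Existence with finite support follows from simple connectivity of the grid together with a winding-number construction, and one must align sign conventions so that each firing move corresponds precisely to the stated integer update of $h$; the positive face adjacent to $e$ (resp.\ the unique face, in the boundary case) must be the one whose $h$-value decreases under firing. Once this framework is in place, the invariance in part~(1) and the monovariant computation in part~(2) become elementary.
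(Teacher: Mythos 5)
The paper does not prove this statement --- it is imported verbatim from \cite{flowfiringprocesses} --- so there is no in-paper proof to compare against; judged on its own, your argument is correct and is essentially the standard one underlying the cited result. For part (1), the key points all check out: a firing move adds a signed face boundary for each $2$-cell containing the fired edge, and face boundaries are cycles, so ${\sf inflow}(v)-{\sf outflow}(v)$ is invariant; at a stable configuration every degree-$2$ edge carries at most one unit of flow in absolute value (and degree-$1$ edges carry none), so the divergence at any vertex, a sum over at most four incident edges, is at most $4$ in absolute value. For part (2), your potential $\Phi(h)=\sum_F h(F)^2$ on the face representation is exactly the right monovariant, and the computation $\Delta\Phi=-2\K(e)+2\le -2$ for an interior firing (which requires $\K(e)=h(F_1)-h(F_2)\ge 2$) is correct and matches the face-firing rule $w'_f=w_f-1$, $w'_g=w_g+1$ stated in Section 2. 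The only hypotheses you should make explicit are that the initial configuration has finite support (so that the $2$-chain primitive exists with finite support and $\Phi$ is finite) and that firing preserves both conservativity and finite support --- both immediate since each firing adds a boundary supported on finitely many cells. With those remarks in place the proposal is a complete and self-contained proof.
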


Moreover, conservative flow configurations are induced by {\bf face representations}.  A face representation of a flow configuration is an assignment of integer values to the $2$-cells of the complex.  Visually, flow is depicted as oriented, with positive values oriented clockwise and negative values oriented counter-clockwise, see below.  

\begin{figure}[H]
\centering
\begin{subfigure}{.4\textwidth}
  \centering
  \includegraphics[width=.45\linewidth]{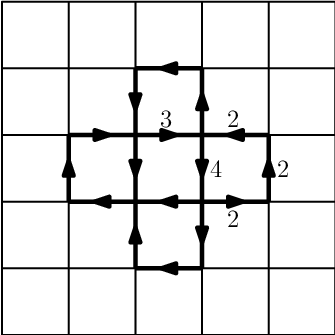}
  \caption{Edge representation}
  \label{fig:sub1}
\end{subfigure}
\begin{subfigure}{.4\textwidth}
  \centering
  \includegraphics[width=.45\linewidth]{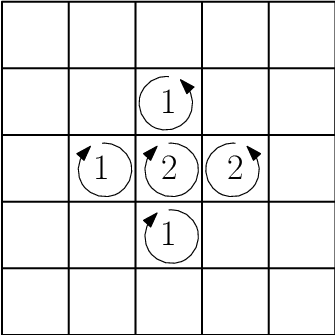}
  \caption{Face representation}
  \label{fig:sub2}
\end{subfigure}
\label{fig:test}
\end{figure}

Henceforth, we assume our configurations are conservative and can therefore be analyzed using the face representation. Given such a configuration $\K$, we denote the weight of a face $f$ by $\K_f$. The {\bf support} of a face configuration is the set of faces $f$ whose weights are nonzero. In the grid, the distance between faces $f$ and $g$, written $\dist(f, g)$ is the \emph{Manhattan distance}. For instance, $\dist(f, f) = 0$, and if $f$ and $g$ are neighbors, i.e. share an edge, then $\dist(f,g) = 1$.

The flow-firing rules described earlier can be translated to firing rules on faces. Let $\K$ be a configuration and suppose $f,g$ are faces of the grid with weights $\K_f$ and $\K_g$ respectively.
If $\K_f \geq \K_g + 2$, then a {\bf face fire} of $f$ towards $g$ results in a new configuration $\K'$ with weights $\K'_f = \K_f - 1$ and  $\K'_g = \K_g + 1$.  If the recipient $g$ is understood from context we will often simply refer to firing a face $f$.

\begin{example}\label{ex: flow-firing}
Below are three face configurations. The second picture (b) is obtained from the first picture (a) by firing the face with weight $2$ to the right. The last picture (c) is obtained from (a) by by firing the face with weight $3$ to the left.
\[
\begin{ytableau}
\textcolor{white}{.}    &  3  & \\
 1 & 2  & \\
     &   &     \\
\end{ytableau}
\hspace{2cm}
\begin{ytableau}
\textcolor{white}{.}
   &  3   & \\
 1 &  1 & 1    \\
     &   &     \\
\end{ytableau}
\hspace{2cm}
\begin{ytableau}
\textcolor{white}{.}
  1 &  2   & \\
  1 &  2   &  \\
      &   &     \\
\end{ytableau}
\]
\begin{center}
(a) \hspace{3cm}
(b) \hspace{3cm}
(c)
\end{center}
\end{example}

We consider a modification  of face-firing called {\bf face-firing with a marked face}. Fix a marked face $f_0$ and an integer $n$, the weight of $f_0$.
The marked face acts simultaneously as a source and a sink, while eschewing some of the properties of both.
We use the usual face-firing rules away from $f_0$, but adapt the rules when firing into or out of $f_0$ in the following way: if $f$ is adjacent to $f_0$ and has weight $\K_f < n$, we can fire from $f_0$ to $f$. 
If on the other hand $\K_f > n$, we can fire from $f$ to $f_0$. 
If $\K_f = n$, then we cannot fire between $f_0$ and $f$. Notably, for any configuration $\K$, the value $\K_{f_0}$ never changes as a result of firing. Example \ref{ex: face-firing with a marked face} illustrates  face firing moves with a distinguished face.

\begin{example}\label{ex: face-firing with a marked face}
Below are three configurations on the grid complex with a marked face (shaded).  To obtain the second configuration (b) from the first configuration (a), we fire the marked face to the left.
This firing is only allowed because we are firing the marked face.
In the second picture (b), the values of the two faces involved are the same, so we cannot fire the marked face to the left.  The third picture (c) is obtained from the second configuration (b) by firing the top middle box to the right. 
\[
\begin{ytableau}
\textcolor{white}{.}    &  3 &    \\
   1 & *(teal!40) 2 &   \\
    &   &   \\
\end{ytableau}
\hspace{2cm}
\begin{ytableau}
\textcolor{white}{.}  &  3 &    \\
  2 & *(teal!40) 2 &  \\
    &   &     \\
\end{ytableau}
\hspace{2cm}
\begin{ytableau}
\textcolor{white}{.}    &  2 & 1   \\
   1 & *(teal!40) 2 &   \\
    &   &   \\
\end{ytableau}
\]
\begin{center}
(a) \hspace{3cm}
(b) \hspace{3cm}
(c)
\end{center}
 \end{example}

The {\bf (total) weight} of a configuration $\K$ is the sum of the non-zero face weights in $\K$:
\[\weight(\K):= \sum_{f} \K_{f}\]
and the {\bf support radius} of $\K$ is $ \len(\K): = \max \{ \dist(f, f_0): \K_f \neq 0\}.$

There are two configurations that will play a significant role in our analysis. The first configuration is the {\bf Aztec diamond}, which is the configuration whose faces $f$ have the following weights
\[
\az(n)_{f} := \text{max}\{n - \dist(f_0, f) + 1, 0\}
\quad
\text{for all faces }f \not= f_0\,,
\]
and $\az(n)_{f_0} = n$. 

The second configuration is the {\bf pulse of height $n$ and radius $r$}, which we denote by $\K(n,r)$. In the edge representation, $\K(n,r)$ can be described as a simple closed curve of radius $r$ around $f_0$, with $n$ units of flow oriented clockwise. In the face representation, $\K(n,r)$ is the configuration such that $\K(n,r)_f = n$ if $\dist(f_0,f)\leq r$ and $\K(n,r)_f = 0$, otherwise. Examples of $\K(4,2)$ and $\az(2)$ are shown below.  In the edge representation, $\K(4,2)$ is a closed curve with $4$ units of flow along the curve and $\az(2)$ has $1$ unit of flow on every edge in the discrete $\ell_1$-ball centered at the distinguished face.

\[
\scalebox{0.85}{
\begin{ytableau}
\textcolor{white}{.} & &  &  &  & & \\
&  &  & *(violet!40!white)4&  & & \\
&  &  *(violet!40!white) 4&  *(violet!40!white) 4 & *(violet!40!white) 4&  & \\
 & *(violet!40!white)4 &  *(violet!40!white) 4 & *(teal!40) 4 &  *(violet!40!white)4 & *(violet!40!white)4 & \\
&  &  *(violet!40!white) 4& *(violet!40!white) 4 &  *(violet!40!white) 4&&  \\
&  &  & *(violet!40!white)4  &  & & \\
 &  &  &  &  & & \\
\end{ytableau}
\hskip2cm 
\begin{ytableau}
\textcolor{white}{.} & &  &  &  & & \\
&  &  & *(violet!40!white)1&  & & \\
&  &  *(violet!40!white) 1&  *(violet!40!white) 2 & *(violet!40!white) 1&  & \\
 & *(violet!40!white)1 &  *(violet!40!white) 2 & *(teal!40) 2 &  *(violet!40!white)2 & *(violet!40!white)1 & \\
&  &  *(violet!40!white) 1& *(violet!40!white) 2 &  *(violet!40!white) 1&&  \\
&  &  & *(violet!40!white)1  &  & & \\
 &  &  &  &  & & \\
\end{ytableau}}
\]
\begin{center}
 \hspace{1cm} Pulse $\K(4,2)$ \hspace{2.2cm} Aztec diamond $\az(2)$
\end{center}

The significance of these two configurations originates with the following theorem, which proved the first instance of confluent behavior in the context of flow-firing.

\begin{theorem}[{{\cite[Theorem 9]{flowfiringprocesses}}}]
\label{thm:pulse}
Every sequence of flow-firing moves on $\K(n,0)$ terminates in $\az(n)$.
\end{theorem}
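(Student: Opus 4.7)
My plan is to prove the theorem by exhibiting $\az(n)$ as the unique stable configuration reachable from $\K(n,0)$, via a pointwise upper bound. I would proceed in four short steps: show $\az(n)$ is stable, establish the invariant $\K \leq \az(n)$ along any firing sequence, appeal to the cited termination result, and then identify $\az(n)$ as the only stable configuration below the bound with $\K_{f_0}=n$.

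Stability of $\az(n)$ is a direct check. Manhattan distance between adjacent faces changes by at most one, so $|\az(n)_f - \az(n)_g| \leq 1$ for every edge-sharing pair $f,g$; this rules out every ordinary face-fire, and the marked-face rule is blocked because each neighbor of $f_0$ already has weight exactly $n=\K_{f_0}$.

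For the invariant, $\K \leq \az(n)$ clearly holds at $\K(n,0)$ because only $f_0$ carries any weight. Suppose it holds for some intermediate configuration $\K$ and a firing $f\to g$ is performed. The only way the bound $\K'_g \leq \az(n)_g$ can fail is if $\K_g = \az(n)_g$ already, but then the firing condition $\K_f \geq \K_g+2$ combined with the $1$-Lipschitz estimate $\az(n)_f \leq \az(n)_g+1$ contradicts $\K_f \leq \az(n)_f$; firings originating at $f_0$ are handled analogously using the marked-face rule. Termination then follows from the cited theorem for conservative configurations, since in the edge representation $\K(n,0)$ is simply the closed boundary loop around $f_0$ carrying $n$ units of clockwise flow, which is conservative at every vertex.

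Finally, to show any stable configuration $\K$ satisfying $\K \leq \az(n)$ and $\K_{f_0}=n$ coincides with $\az(n)$, I would induct on $\dist(f,f_0)$. Stability of the marked-face rule forces $\K_f \geq n$ for each neighbor of $f_0$, and the invariant gives $\K_f \leq n$, forcing equality. For a face $f$ at distance $k \leq n$, any inward neighbor $g$ satisfies $\K_g = \az(n)_g$ by induction, and the ordinary stability bound $\K_f \geq \K_g-1$ meets $\K_f \leq \az(n)_f$ at equality. Beyond distance $n$ the invariant together with non-negativity (which I would separately verify is preserved by firing) force $\K_f=0$. The main obstacle I anticipate is not in any single step but in the bookkeeping around the corners of the diamond, where several inward neighbors simultaneously constrain a given face; some care will be needed to ensure those constraints propagate outward consistently before the induction can close.
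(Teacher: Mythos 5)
This statement is imported from \cite{flowfiringprocesses} and the paper gives no proof of it, so there is nothing internal to compare against line by line; judged on its own terms, your argument is correct and complete. The structure --- (i) $\az(n)$ is stable, (ii) the pointwise bound $\K_f\leq\az(n)_f$ is an invariant of firing, (iii) termination follows from conservativity, (iv) the only stable, non-negative configuration with $\K_{f_0}=n$ lying below $\az(n)$ is $\az(n)$ itself --- is the standard confluence-via-invariant proof, and each step checks out: the $1$-Lipschitz estimate on $\az(n)$ together with the firing threshold $\K_f\geq\K_g+2$ does rule out creating a violation, the marked-face rule does force every neighbor of $f_0$ up to $n$ in a stable configuration, and non-negativity (which you rightly flag as needing its own one-line check) closes off the faces beyond distance $n$. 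Your worry about ``corners'' is unfounded: every face at distance $k\geq 1$ has at least one inward neighbor at distance $k-1$, and multiple inward neighbors only impose redundant, mutually consistent lower bounds, so the outward induction closes without extra bookkeeping. It is worth noting that your invariant step is exactly the contrapositive of the computation the paper does prove as Lemma \ref{lemma:free-to-fire}, and your step (iv) is a strengthening of Lemma \ref{lem:sub-configurations} (which only shows $\az(n)$ is \emph{reachable} from a configuration below the bound, whereas you show it is \emph{forced} at stabilization); so your proposal effectively reassembles the paper's own toolkit into a self-contained proof of the cited result.
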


The three regimes theorem for flow-firing (Theorem \ref{thm:3regimes}) provides a fuller description of the possible behavior of initial configurations around a marked face, in particular showing that the pulse is essentially the only initial configuration that gives a confluent system.  

For context and comparison, we give the precise statements of the graphical theorem  before proceeding with the proof of the flow-firing theorem.

\begin{theorem}[{{\cite[Theorem 3.3]{bjorner-lovasz-shor}}}: Three Regimes for Graphical Chip-Firing]
Let $G$ be a finite connected graph with $n$ vertices and $m$ edges.
Let $\bf{c}$ be a configuration with $N$ chips in total.
Then
\begin{enumerate}
\item If $N < m$ then the chip-firing process terminates after finitely-many firing. 
\item If $m \leq N \leq 2m-n$ then there exists an initial configuration which
stabilizes and also one which does not.
\item If $N > 2m-n$ then the chip-firing process never stabilizes.
\end{enumerate}
\label{BLS}
\end{theorem}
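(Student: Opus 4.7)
The plan is to prove the three regimes separately, handling them in order (3), (1), (2). Regime (3) is immediate: a stable configuration has $c(v)\leq\deg(v)-1$ at every vertex, so at most $2m-n$ total chips. Since firings preserve total chip count, if $N>2m-n$ no stable configuration is reachable, and because every non-stable configuration admits at least one firing, the process continues forever.

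For regime (1), I would argue by contradiction: assume an infinite legal firing sequence exists. The first step is a propagation lemma stating that every vertex must fire infinitely often. If some vertex $v$ fires only finitely many times while an adjacent vertex $u$ fires infinitely often, then $v$ absorbs chips from $u$'s firings without releasing them, eventually exceeding its threshold; iterating this across edges of $G$ and using connectedness forces every vertex to fire infinitely often. Pick a time $T$ by which every vertex has fired at least once, let $f_i$ denote the number of firings of $v_i$ by time $T$, and for each $i$ consider the moment of $v_i$'s last firing by time $T$. Writing $g_{ij}$ for the number of firings of $v_j$ preceding this moment, legality at that instant gives $c(v_i) \geq \deg(v_i)f_i - \sum_{j \sim i} g_{ij}$. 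For any edge $\{v_i,v_j\}$, one of the two corresponding last firings must strictly precede the other, forcing $g_{ij}+g_{ji}\leq f_i+f_j-1$. Summing the vertex inequalities over all $i$ and applying the edge bound (whose sum over all edges telescopes to $\sum_i \deg(v_i) f_i - m$) yields $N\geq m$, contradicting $N<m$.

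For regime (2), I would give explicit configurations of each type. A terminating configuration with $N$ chips is easy: since $N\leq 2m-n$, start from $c(v)=\deg(v)-1$ and remove $2m-n-N$ chips (keeping counts nonnegative); the result is stable. For the non-terminating construction, fix any ordering $v_1,\ldots,v_n$ of $V$, set $d_k:=|\{j<k : v_j\sim v_k\}|$ (so $\sum_k d_k=m$, counting each edge once by its later endpoint), and define $c'(v_k):=\deg(v_k)-d_k\geq 0$. Then $\sum_k c'(v_k)=m$, and firing in the order $v_1,\ldots,v_n$ is legal because at each step $v_k$ has exactly $c'(v_k)+d_k=\deg(v_k)$ chips. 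After one pass each vertex's chip count is unchanged, so the loop repeats indefinitely. For $N$ strictly larger than $m$, place the $N-m$ extra chips anywhere; they cannot obstruct any firing, so the loop remains legal and the process still does not terminate.

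The main obstacle lies in regime (1), specifically in extracting the clean lower bound $N\geq m$. The argument hinges on the edge-pair inequality $g_{ij}+g_{ji}\leq f_i+f_j-1$, which requires identifying each vertex's last-firing instant and comparing them pairwise across every edge. The propagation lemma that every vertex fires infinitely often is conceptually standard but must be justified carefully via connectedness of $G$, since dead vertices could in principle accumulate chips indefinitely without firing if they were not in contact with infinitely active neighbors.
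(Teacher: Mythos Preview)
The paper does not supply a proof of this theorem: it is quoted from \cite{bjorner-lovasz-shor} purely as background and motivation for the flow-firing analogue, with no argument given. There is therefore nothing in the paper to compare your proposal against.

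For what it is worth, your outline is essentially the original Bj\"orner--Lov\'asz--Shor argument and is correct. The only soft spot is the justification of the propagation lemma: saying that $v$ ``eventually exceeds its threshold'' is not itself a contradiction, since an infinite legal sequence is free to ignore a fireable vertex forever. The clean contradiction is that $v$'s chip count would grow without bound---it gains a chip each of the infinitely many times $u$ fires and loses only finitely many in total---violating $c(v)\leq N$. With that adjustment, your regime~(1) counting (the last-firing inequalities summing to $N\geq m$) and your explicit regime~(2) constructions go through.
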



\section{Path-Firing}\label{sec:pathFire}

A \textbf{path} $\PPP $ is a collection of successive faces  $\PPP  = (f_1,\dots,f_k)$ such that $ \dist(f_{i-1}, f_{i})=1$ for $2\leq i\leq k$. A flow-firing process involving only the faces of $\PPP$ is called {\bf path-firing}.
A path is called {\bf standard} if each $f_i$ has value $\max\{0, n-\dist(f_0,f_i)+1\}$. In other words, a standard path is one in which the weight of $f$ matches $\az(n)_f$.


Let $\R$ be a path configuration on $\PPP$ such that the face weights of $\R$ form a weakly decreasing sequence  $\R_1 \geq \dots \geq \R_k$. A configuration obtained by path-firing on $\R$ along the path $\PPP$ is called an {\bf intermediate} path configuration; if there are no more path-firing moves left, such a configuration is called {\bf path-stable}. Note that the face weights of any intermediate path configuration always form a weakly decreasing sequence. This means that firing from face $f_i$ can only be directed towards face $f_{i+1}$---in other words, firing can only happen in one direction along $\PPP$.

It turns out that path-firing is equivalent to graphical chip-firing along a line, which has been studied in \cite{BallsWalls}; see also \cite[Chapter 5]{klivans-book}. We will thus be able to apply standard results from graphical chip-firing to study path-firing. The key ingredient to forge this connection is the notion of \textbf{discrete derivatives}, defined below.  We thank an anonymous referee for pointing out this connection. 

Let $\R$ be a path configuration on a path $\PPP$ as given above. Write $d(\R)=(d_1,\dots,d_k)$ to denote the \textbf{discrete derivative} of $\R$, where $d_k=\R_k$ and $d_i = \R_i - \R_{i+1}$ for $i=1,\dots,k$.

Consider the configuration $\R'=(\R_1, \ldots, \R_i-1,\R_{i+1}+1,\ldots, \R_k)$ obtained from $\R$ by firing from face $f_i$ towards face $f_{i+1}$. Then the discrete derivative of $\R'$ is
\begin{equation}\label{eq:discrete.derivative}
  d(\R')= \begin{cases}
       (d_1, \ldots, d_{i-1}+1, d_i-2, d_{i+1}+1,\ldots, d_k)& \text{ if } i>1,\\
      (d_1-2, d_2+1,d_3,\ldots, d_k) & \text{ if } i=1.
  \end{cases}  
\end{equation}
 Notice that $\R$ can be recovered from its discrete derivative by taking partial sums: the weight $\R_i$ is given by 
 \[ \R_i = \sum_{j = i}^k d_j. \] This leads to the following, which follows immediately from Equation \eqref{eq:discrete.derivative}. 

 \begin{proposition}\label{prop:discretederivative}
     Path-firing on $\R$ along $\PPP$ is equivalent to chip-firing on a line with vertices $v_0,v_1,\ldots, v_k$, where  for any $0 \leq i \leq k$, vertex $v_i$ has $d_i$ chips, and vertex $v_0$ is a sink vertex (with no chips). 
 \end{proposition}

\begin{example} \label{example:configurationR.ell}
    Consider a path configuration $\R(\ell)$ for a fixed integer $\ell$, with $\R(\ell)_i = n$ for all $1 \leq i \leq \ell$. Then the discrete derivative $d(\R(\ell)) = (d_0, d_1, \cdots)$ of $\R(\ell)$ 
    \[ d(\R(\ell)) = (0, \cdots, 0,n, 0,\cdots 0)\]
    The corresponding graphical configuration has $d_{\ell} = n$ and $d_i = 0$ for all $i \neq \ell.$ This special case of graphical chip-firing is studied in \cite{BallsWalls} and its final configuration is well-understood (see Corollary \ref{lem:every-face-fires}).
\end{example}

Our ultimate goal is to understand configurations obtained from flow-firing on $\K(n,r)$. To do so, we will first analyze path-firing along a single row of $\K(n,r)$, which we implicitly introduced in Example \ref{example:configurationR.ell}. Explicitly, for any $\ell \in \Z_{\geq 0}$, define
\[ \PPP(\ell) := (f_1\ldots, f_{\ell+ \lfloor\frac{n}{2}\rfloor}),\]
and a configuration $\R (\ell)$ on $\PPP(\ell)$:
\[ \R(\ell):= 
\underbrace{(n,n, \dots, n}_{\ell~\textrm{times}},0,0, \dots)\,.
\]
Since $\PPP (\ell)$ does \emph{not} include  $f_0$, the number of chips stays constant while firing along $\PPP (\ell)$.
The discrete derivative of $\R(\ell)$ is 
\[
d(\ell) := 
\underbrace{(0,0, \dots, 0}_{\ell-1~\textrm{times}},n,0,0, \dots )\,.
\]

By Proposition \ref{prop:discretederivative}, flow-firing on a path with initial configuration $\R(\ell)$ is the same as graphical chip-firing on a line with $\ell + \lfloor\frac{n}{2}\rfloor + 1$ vertices (equivalently $\ell +\lfloor\frac{n}{2}\rfloor $ edges) where the first vertex is a sink. 

Chip-firing in the above context will terminate \cite[Theorem 2.5.2]{klivans-book}, but we can be more precise about its termination behavior using \cite[Theorem 1]{BallsWalls}. Below, we translate this behavior back into the language of flow-firing, so that we can apply it later in Section \ref{sec:3-regimes-2-aztec}.

\begin{cor}[{{\cite[Theorem 1]{BallsWalls}}}]\label{lem:every-face-fires}
For any $\ell \geq 0$, the path that leads to $\R(\ell)$ along $\PPP (\ell)$ terminates uniquely. Let $\R^{\circ}$ be this terminal configuration, and let $S_\ell$ be the set of faces that fire in the process of obtaining $\R^{\circ}$ from $\R(\ell)$. Then   
    \begin{itemize}
\item If $\ell > \lfloor \frac{n}{2} \rfloor$ then $S_\ell = \{ f_{k+1}, f_{k+2}, \dots, f_{r-1}\}$,
\item If $\ell \leq \lfloor \frac{n}{2} \rfloor$, then $S_\ell = \{ f_1, \dots,f_{r-1} \}$,
\end{itemize}
where $k = \ell - \lfloor \frac{n}{2} \rfloor$ and let $r = \len(\R^\circ)$. Moreover, if $\ell \leq \lceil \frac{n}{2} \rceil$ 
    then $\len (\R^\circ) = \ell + \lfloor \frac{n}{2} \rfloor$. 
\end{cor}

As an example, we compute the discrete derivative $d(\R^\circ)$ corresponding to the configuration $\R^\circ$ in Corollary \ref{lem:every-face-fires},  when $\ell \leq \lceil \frac{n}{2}\rceil$:
\begin{equation}
    d(\R^\circ) = \begin{cases}
        \underbrace{(1,1, \dots, 1}_{\ell + \lfloor\frac{n}{2}\rfloor~\textrm{times}}, 0, \dots ) &\textrm{ if } n \textrm{ is odd, }\\
        &\\
        \underbrace{(1,1, \dots, 1}_{\ell-1~\textrm{times}},0,\underbrace{1,1, \dots,1 }_{\frac{n}{2} ~\textrm{times}}, 0,\dots) & \textrm{ if } n \textrm{ is even}.
    \end{cases}
\end{equation}
As discussed above, $d(\R^\circ)$ completely determines $\R^\circ$. Determining $d(\R^\circ)$ when $\ell> \lceil \frac{n}{2}\rceil$ is even simpler and is given explicitly in \cite{BallsWalls}.

In addition to describing the termination behavior of $\R(\ell)$, Proposition \ref{prop:discretederivative} allows us to better understand intermediate path configurations more generally. Lemma \ref{lem:adjacent-triple} below implies that the entries of discrete derivative cannot be zero ``too often'' unless it started off that way.
We omit the proof because it is similar to the proof of \cite[Lemma 1]{BallsWalls}.

\begin{lemma}\label{lem:adjacent-triple}
Let $\R$ be a path configuration on a path $\PPP$ whose face weights form a weakly decreasing sequence. 
Let $\R'$ be an intermediate path configuration obtained from path-firing on $\R$. Let $d(\R') = (d_0, \cdots, d_k, \cdots)$ denote the discrete derivative of $\R'$.  Then 
\begin{enumerate}
    \item[(a)] if $d_i=d_{i+1}=0$, the faces $f_i, f_{i+1}$ and $f_{i+2}$ never fire, and
    \item[(b)]if $d_i = d_{j} = 0$  for $i+1<j$ and  $d_\ell = 1$ for $i < \ell < j$ then the faces $f_{i}, f_{i+1}, f_{j},$ and $f_{j+1}$ never fired. 
\end{enumerate}
\end{lemma}

Lemma \ref{lem:adjacent-triple} significantly limits the types of configurations that can arise from path-firing on along $\PPP$, and will be useful later in Section \ref{regime_3:nonuniqueterminate}.


\section{The first regime}\label{sec:regimes}

\begin{reptheorem}{thm:3regimes}[Part \ref{3regime1}]
If $r \leq 1$, then firing from $\K(n,r)$ terminates uniquely in the Aztec diamond $\az(n)$.
\end{reptheorem}

Our approach to showing a configuration \emph{can} terminate in $\az(n)$ (e.g. Regimes 1 and 2) relies on finding an intermediate configuration of $\K(n,r)$ that does not ``violate" the Aztec diamond. A configuration $\K$ {\bf violates} the Aztec diamond $\az(n)$ if some face $f$ has $ \K_{f} > \az(n)_{f}$. After finding such a configuration, Lemma \ref{lem:sub-configurations} allows us to conclude that the initial configuration $\K(n,r)$ \emph{can} terminate in $\az(n)$.

\begin{lemma}\label{lem:sub-configurations}
Let $\K$ be a configuration with $\K_{f_0}=n$.
If $\K_f \leq \az(n)_f$ for all faces $f$,
then there is a sequence of firing moves sending $\K$ to $\az(n)$.
\end{lemma}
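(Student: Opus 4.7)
The approach is induction on the total deficit
\[
D(\K) \;:=\; \sum_f \bigl(\az(n)_f - \K_f\bigr),
\]
which is nonnegative by hypothesis; the base case $D(\K)=0$ is exactly $\K = \az(n)$.  For the inductive step I will produce a firing sequence that strictly decreases $D$ without ever pushing any face above its Aztec value, so the inductive hypothesis applies to the resulting configuration.

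To construct such a sequence, I would pick a deficient face $g$ (so $\K_g < \az(n)_g$) at \emph{minimum} distance $k := \dist(f_0, g)$ from the marked face.  By minimality, every face strictly closer to $f_0$ already realizes its Aztec value $n - \dist(f_0,\cdot)+1$.  Choose any grid geodesic $f_0, f_1, \ldots, f_k = g$, and perform the following ``push-and-refill'' moves in order along it: first fire $f_{k-1} \to f_k$, then $f_{k-2} \to f_{k-1}$, and so on, concluding with a single marked-face fire $f_0 \to f_1$.  Intuitively, the first fire transports one unit of flow from the full neighbor $f_{k-1}$ into $g$, and each subsequent move refills the face just depleted by its predecessor on the path, with the marked face sourcing the very last refill.

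The main verification is that every fire in this sequence is legal and produces no Aztec overflow.  A short parallel induction along the path shows that at the moment one fires $f_{k-j} \to f_{k-j+1}$, the source still sits at its initial Aztec value while the target has just been decremented to exactly two below its Aztec value, so the required gap of $2$ is available; the fire then restores the target to its Aztec value and drops the source by one (which the next move promptly refills).  The final marked-face fire is licensed because at that moment $\K_{f_1} = n-1 < n$.  At termination, every face on the path other than $g$ is back at its Aztec value, while $\K_g$ has gone up by exactly $1$; hence the new configuration still satisfies $\K \leq \az(n)$ and has deficit $D(\K) - 1$.

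The only real obstacle I anticipate is the careful bookkeeping needed to confirm the gap-of-$2$ condition at each step along the path, together with a brief handling of the degenerate case $k = 1$, which consists solely of a single marked-face fire $f_0 \to g$.  No tools beyond the firing rules and the defining formula for $\az(n)$ appear to be needed.
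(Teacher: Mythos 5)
Your proposal is correct and takes essentially the same approach as the paper: both select a deficient face all of whose strictly closer faces already realize their Aztec values, and fire toward it along a path from $f_0$, using the observation that a face at its Aztec value has the required gap of $2$ to fire into a deficient farther neighbor, with the marked-face rule supplying the final fire out of $f_0$. The paper phrases this as repeatedly pushing the deficiency inward until it reaches $f_0$ rather than as an induction on the total deficit along a fixed geodesic, but the firing moves are the same.
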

\begin{proof}
The core of this proof comes from two observations.
\begin{enumerate}
    \item Let $f,g$ be adjacent faces with $g$ strictly further from $f_0$ than $g$, so that $\dist(f_0,f) + 1 = \dist(f_0,g)$.
    If $K_f = \az(n)_f$ and $K_g < \az(n)_g$, then $K_g \leq K_f - 2$ and hence we can fire from $f$ to $g$.
    \item If $g$ is adjacent to $f_0$ and $K_g < \az(n)_g$, then $K_g \leq n - 1$ and we can fire from $f_0$ to $g$.
\end{enumerate}
Let $g$ be a face such that $K_g < \az(n)_g$ but all faces strictly closer to $f_0$ satisfy $K_f = \az(n)_f$. 

If $g$ is adjacent to $f_0$, then by (2) above, we can fire from $f_0$ to $g$. If $g$ is not adjacent to $f_0$, then there is some face $f$ adjacent to $g$ and strictly closer to $f_0$; by (1) we can fire from $f$ to $g$.
Repeat this process until $\K_g = \az(n)_g$ for all faces $g$.
\end{proof}

\begin{proposition} \label{prop:3-regimes-1}
If $r \leq 1$, then $\K(n,r)$ terminates uniquely in the Aztec diamond.
\end{proposition}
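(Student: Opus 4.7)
The plan is to split into the two cases $r = 0$ and $r = 1$. The case $r = 0$ is immediate: $\K(n,0)$ is precisely the configuration in Theorem~\ref{thm:pulse}, which already asserts that every firing sequence from $\K(n,0)$ terminates uniquely in $\az(n)$. So the entire content of the proposition is in the case $r = 1$.

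For $r = 1$, the central idea is a \emph{reachability} trick: I will show that $\K(n,1)$ is itself reachable from $\K(n,0)$ via a legal sequence of firing moves, and then piggyback on Theorem~\ref{thm:pulse}. To build the sequence, start at $\K(n,0)$, where only $f_0$ has weight $n$ and its four neighbors have weight $0$. Using the marked-face firing rule, we may fire from $f_0$ to any neighbor $g$ with $\K_g < n$; this increases $\K_g$ by $1$ while leaving $\K_{f_0} = n$ unchanged. Firing $f_0$ toward each of its four neighbors exactly $n$ times (the order does not matter, and each step is legal since the neighbor's weight is still strictly less than $n$) produces precisely $\K(n,1)$.

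Now suppose $\K'$ is any stable configuration reachable from $\K(n,1)$ via some firing sequence $\sigma$. Concatenating the sequence $\K(n,0)\rightsquigarrow \K(n,1)$ constructed above with $\sigma$ yields a firing sequence starting at $\K(n,0)$ and terminating at $\K'$. By Theorem~\ref{thm:pulse}, every such sequence terminates in $\az(n)$, so $\K' = \az(n)$. This gives uniqueness. Existence of at least one stable configuration reachable from $\K(n,1)$ follows from Lemma~\ref{lem:sub-configurations}, since a direct check shows $\K(n,1)_f \leq \az(n)_f$ for every face $f$ (equality at $f_0$ and its four neighbors, where $\az(n)$ takes values $n$ and $n$ respectively, and $0 \leq \az(n)_f$ elsewhere).

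The only subtle point is verifying that the reachability step is truly composed of \emph{legal} firing moves under the marked-face rules; this is immediate from the definition of firing at a marked face. There is no serious obstacle; the proof is essentially a one-line reduction to Theorem~\ref{thm:pulse} once the reachability observation is made.
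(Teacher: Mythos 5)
Your proof is correct and follows essentially the same route as the paper: both handle $r=0$ by direct citation of Theorem~\ref{thm:pulse}, and both handle $r=1$ by observing that $\K(n,1)$ is an intermediate configuration reachable from $\K(n,0)$ (via repeated marked-face fires from $f_0$ to its neighbors), so that confluence is inherited from Theorem~\ref{thm:pulse}. You spell out the reachability step and the concatenation argument explicitly where the paper leaves them implicit; the appeal to Lemma~\ref{lem:sub-configurations} for ``existence'' is superfluous, since prepending the reachability sequence already forces every maximal firing sequence from $\K(n,1)$ to terminate in $\az(n)$.
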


\begin{proof}
The $r = 0$ case was proven in \cite[Theorem 9]{flowfiringprocesses}. If $r = 1$, then $\K(n,1)$ is an intermediate configuration obtained from  $\K(n,0)$ and it terminates uniquely at $\az(n)$ by \cite[Theorem 9]{flowfiringprocesses}.
\end{proof}

We will see in the subsequent section that although there are many more configurations that \emph{can} terminate in the Aztec diamond, in general this termination will not be unique. 


\section{The Second Regime}\label{sec:regime2}

\begin{reptheorem}{thm:3regimes}[Part \ref{3regime2-a}]
If $2 \leq r\leq \lceil\frac{n}{2}\rceil$, flow-firing on $\K(n,r)$ does not terminate uniquely. 
\end{reptheorem}

In order to prove this, we will show that there exists (i) a sequence of firings that terminate in $\az(n)$ and (ii) a sequence of firings that terminate in a configuration that is not $\az(n)$.

In what follows, we state our results for Regime 2 in terms of $\K(n,r)$. However,
both claims in Theorem \ref{thm:3regimes}(\ref{3regime2})
hold more generally for a closed curve with weight $n$ strictly containing $\K(n,1)$ and contained within $\K(n,r)$ for $2 \leq r \leq \lceil\frac{n}{2}\rceil$.


\subsection{Second regime: termination violating the Aztec diamond.}\label{sec:3-regimes-2-violate}
Our goal in this section is to show that the configuration $\K(n,r)$ need not terminate in the Aztec diamond when $2 \leq r \leq \lceil \frac{n}{2}\rceil$.

\begin{lemma}\label{lemma:free-to-fire}
Let $\K$ be any configuration and let $f$ and $g$ be adjacent faces with
\[
\dist(f_0, f) < \dist(f_0, g) < n\,.
\]
If $f$ violates the Aztec diamond but $g$ does not, then one can fire from $f$ to $g$. In the resulting configuration $\K'$ , $\K'_f$ will violate the Aztec diamond if $\K_f > \az(n)_f + 1$, and
$\K'_g$ will violate the Aztec diamond if $\K_{g} = \az(n)_{g}$.
\end{lemma}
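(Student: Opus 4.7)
The plan is a direct unwinding of the definitions, so I expect no real obstacle beyond bookkeeping; the point is to translate the three hypotheses (adjacency with strict distance increase, the distance bound $<n$, and the violation/non-violation asymmetry at $f$ and $g$) into an explicit chain of inequalities.

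First I would record the key numerical identity $\az(n)_g = \az(n)_f - 1$. Since $f$ and $g$ are adjacent and $\dist(f_0,f) < \dist(f_0,g)$, we have $\dist(f_0,g) = \dist(f_0,f)+1$. The hypothesis $\dist(f_0,g) < n$ guarantees $n - \dist(f_0,g) + 1 \geq 2 > 0$, so neither maximum in the definition of $\az(n)$ is attained by $0$, and the formula $\az(n)_h = n - \dist(f_0,h) + 1$ applies to both $h = f$ and $h = g$. This gives $\az(n)_f = \az(n)_g + 1$ with both values at least $2$.

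Next I would show that firing from $f$ to $g$ is legal, i.e. that $\K_f \geq \K_g + 2$. Because $f$ violates the Aztec diamond, $\K_f \geq \az(n)_f + 1 = \az(n)_g + 2$. Because $g$ does not, $\K_g \leq \az(n)_g$. Chaining these,
\[
\K_f \;\geq\; \az(n)_g + 2 \;\geq\; \K_g + 2,
\]
so the face fire from $f$ to $g$ can be performed, producing $\K'$ with $\K'_f = \K_f - 1$ and $\K'_g = \K_g + 1$ (and all other weights unchanged).

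Finally I would verify the two conditional claims about $\K'$. For $f$: $\K'_f > \az(n)_f$ iff $\K_f - 1 > \az(n)_f$ iff $\K_f > \az(n)_f + 1$, which is exactly the stated condition (reading the paper's $\az(n)+1$ as $\az(n)_f + 1$). For $g$: $\K'_g > \az(n)_g$ iff $\K_g + 1 > \az(n)_g$ iff $\K_g \geq \az(n)_g$; combined with $\K_g \leq \az(n)_g$ from the non-violation hypothesis, this forces $\K_g = \az(n)_g$, matching the stated condition. The only subtle point is recognizing that the distance bound $\dist(f_0,g) < n$ is exactly what is needed to apply the linear formula for $\az(n)$ on both faces; otherwise the identity $\az(n)_g = \az(n)_f - 1$ could fail, and the proof would break.
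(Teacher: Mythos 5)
Your proof is correct and follows the same route as the paper's: you derive $\az(n)_f = \az(n)_g + 1$ from the distance hypotheses, chain $\K_f \geq \az(n)_f + 1 = \az(n)_g + 2 \geq \K_g + 2$ to legitimize the fire, and then check the two conditionals by inspecting $\K'_f = \K_f - 1$ and $\K'_g = \K_g + 1$. You are in fact slightly more careful than the paper, which dismisses the conditional claims as ``clear from inspection'' and does not spell out why the bound $\dist(f_0,g)<n$ is needed for the linear formula for $\az(n)$ to apply.
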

\begin{proof}
To see the first part of the lemma, we need to show that $\K_f - \K_g \geq 2$.
Recall that 
\[ \az(n) = \max \{ n - \dist(f_0, f) + 1, 0 \}. \] 
Since $\dist(f_0, f) < \dist(f_0, g) < n$, we have that $\az(n)_{g} < \az(n)_{f}$. Since they are adjacent,
\[ \az(n)_f - \az(n)_g = 1.\]
Thus 
\[
\K_f \geq \az(n)_{f} + 1 = \az(n)_{g} + 2.
\]
On the other hand, since $\K_g$ does not violate the Aztec diamond, $\K_g \leq \az(n)_g$. 
 Thus 

\[
\K_f \geq \K_g + 2
\]
as desired. The second part of the claim is clear from inspection.
\end{proof}

\begin{lemma}[Flooding lemma]\label{lem:flooding-lemma}
Let $f$ be a face such that $\K_f>\az(n)_f > 0$.
Then there is configuration $\K'$, reachable from $\K$ by flow-firing moves, and a face $g$ such that
\begin{itemize}
    \item $\dist(f_0, g) = 1 + \dist(f_0, f)$ and 
    \item $\K'_{g} > \az(n)_{g}$.
\end{itemize}
\end{lemma}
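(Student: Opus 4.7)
The plan is to split the argument into cases depending on the weights of $\K$ at the faces in $N(f) := \{g : g \sim f,\ \dist(f_0, g) = d+1\}$, where $d := \dist(f_0, f)$. If some $g \in N(f)$ already satisfies $\K_g > \az(n)_g$, then $\K' := \K$ witnesses the lemma immediately. If some $g \in N(f)$ satisfies $\K_g = \az(n)_g$, then by Lemma~\ref{lemma:free-to-fire} we may fire $f$ toward $g$, producing a configuration $\K'$ with $\K'_g = \az(n)_g + 1 > \az(n)_g$, as required.

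The remaining case is that every $g \in N(f)$ satisfies $\K_g < \az(n)_g$ strictly. Here the plan is first to build up, via firings that do not involve $f$, the weight of some $g \in N(f)$ to exactly $\az(n)_g$, and then to reduce to the preceding case. In the two-dimensional grid, at least one $g \in N(f)$ admits a shortest path $\PPP = (f_0, h_1, \ldots, h_d, h_{d+1}=g)$ from $f_0$ that avoids $f$: if $f$ does not lie on any coordinate axis then every $g \in N(f)$ admits such a path, and if $f$ lies on an axis then the two ``lateral'' neighbors of $f$ do (only the neighbor of $f$ collinear with $f_0$ and $f$ may fail to have one). Fix such a $g$ and $\PPP$, and perform pulse propagation along $\PPP$ as in the proof of Theorem~\ref{thm:pulse}: iteratively fire $f_0 \to h_1$, and then alternate outward firings $h_i \to h_{i+1}$ with refill firings from $f_0$, lifting each $\K_{h_j}$ in turn up to $\az(n)_{h_j}$ and finally $\K_g$ up to $\az(n)_g$. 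Since no such firing is incident to $f$, the weight $\K_f$ is preserved and continues to exceed $\az(n)_f$, and we are back in the setting of the previous paragraph.

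The main obstacle is to justify that the pulse propagation along $\PPP$ succeeds independently of the off-$\PPP$ values of $\K$, even when those values themselves violate the Aztec diamond. This is handled by the observation that each firing condition $\K_{h_i} \geq \K_{h_{i+1}} + 2$ depends only on the weights of the two adjacent faces involved, both of which lie on $\PPP$; off-path values have no bearing on firings executed along $\PPP$. Combined with the inexhaustible firing capacity of the marked face $f_0$, this allows the pumping procedure of Theorem~\ref{thm:pulse} to be replayed verbatim along $\PPP$ to raise $\K_g$ to exactly $\az(n)_g$, completing the reduction to the earlier case. The subsidiary grid-geometric claim about the existence of an $f$-avoiding shortest path is a short verification by cases on the position of $f$ relative to the coordinate axes.
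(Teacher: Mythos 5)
Your proof is correct, and its core mechanism is the same as the paper's: route flow from $f_0$ along an $f$-avoiding path to an outward neighbor $g$ of $f$, bring $\K_g$ up to $\az(n)_g$, and then invoke Lemma~\ref{lemma:free-to-fire} to fire $f$ into $g$. The case decomposition differs, though, in a way worth recording. The paper's proof quietly strengthens the hypothesis by taking $f$ to be a violator of \emph{maximal} distance from $f_0$ (which is how the lemma is actually invoked in Theorem~\ref{claim:3-regimes-2.1}); this guarantees every outward neighbor $g$ satisfies $\K_g \leq \az(n)_g$, and the proof then splits on whether the chosen path itself contains violators, handling that case by repeatedly pushing the outermost on-path violation toward $g$. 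You instead prove the statement as literally written, for an arbitrary violating $f$, by casing on the neighbor's value ($>$, $=$, or $<$ its Aztec value) and observing that the pumping along $\PPP$ only ever tests the two on-path weights involved in each fire, so off-path data is irrelevant. That is a genuine (if modest) gain in generality, and your explicit verification that some outward neighbor of $f$ admits an $f$-avoiding shortest path fills in a geometric step the paper leaves implicit. One small caution: your ``main obstacle'' paragraph addresses only \emph{off}-path violations, but a face $h_j$ \emph{on} $\PPP$ may also start with $\K_{h_j}>\az(n)_{h_j}$, in which case it cannot be ``lifted up to'' its Aztec value as you phrase it. This is harmless --- a surplus on the path only makes outward firing easier, and the always-fire-into-the-innermost-deficient-face strategy (with a weighted deficit as potential function, as in Lemma~\ref{lem:sub-configurations}) still terminates with $\K_g=\az(n)_g$ --- but it deserves a sentence; it is precisely the situation the paper's Case~II treats explicitly.
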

\begin{proof}
We explicitly construct $\K'$ as in the statement of the lemma. Let $V$ be the set of faces that violate the Aztec diamond in the configuration $\K$.
Take $f \in V$ to be a face with maximal distance from $f_0$. By the maximality of $f$, every neighbor $g$ of $f$ with $\dist(f_0, g) = 1 + \dist(f_0, f)$ does \emph{not} violate the Aztec diamond. Choose such a neighbor $g$ and a path $\PPP = (f_0, f_1, \cdots, f_{\ell} = g)$ that does not pass through $f$.

We claim that one can fire along $\PPP$ until $g$ violates the Aztec diamond. There are two cases:
\begin{itemize}
    \item \emph{Case I: For $1 \leq i \leq \ell$, no $f_i$ in $\PPP$ violates the Aztec diamond.}\\
    First fire along $\PPP$ to create a standard path. This is always possible because $f_i$ never violates the Aztec diamond. Note that after this process, $g$ has value $\az(n)_{g}$. By Lemma~\ref{lemma:free-to-fire}, we can fire from $f$ into $g$, which means that $g$ will then violate the Aztec diamond.

    \item \emph{Case II: There is some $1\leq i\leq \ell$ where $f_i$ violates the Aztec diamond.}\\
    Pick the violating face $f_i$ that is furthest from $f_0$. Thus $f_{i+1}$ does not violate $\az(n)$, and so by Lemma \ref{lemma:free-to-fire} we can fire from $f_i$ to $f_{i+1}$. Repeat this process; it necessarily will terminate because we are in the conservative flow setting. After termination, either we are in \emph{Case I} or the violation has been pushed to $\PPP_{\ell} = g$. 
\end{itemize}
Thus, the resulting configuration $\K'$ has $\K'_{g} > \az(n)_{g}$ since $g$ violates the Aztec diamond.
\end{proof}

The Flooding Lemma implies that for a configuration $\K(n,r)$ with $r$ large enough, one can construct a terminal configuration that violates $\az(n)$. 

\begin{reptheorem}{thm:3regimes}[Part \ref{3regime2-b}]
Given a configuration $\K(n,r)$ with $2 \leq r\leq \lceil\frac{n}{2}\rceil$, there is a stable configuration $\K^*$ such that $\K^* \neq \az(n)$.
\end{reptheorem}

\begin{proof}
Since $2 \leq r\leq \lceil\frac{n}{2}\rceil$, there exists at least one face violating $\az(n)$.
Let $V$ be the set of all such faces, and choose $f \in V$ of maximal distance from $f_0$.
Since $f$ is maximal, every face $g$ with $\dist(g,f_0) > \dist(f,f_0)$ has $\K_g \leq \az(n)_g$.
Fix such a $g$ with $\dist(g,f_0) = \dist(f,f_0) + 1$.
Applying the Flooding Lemma (Lemma \ref{lem:flooding-lemma}) to $f$ and $g$ gives a new configuration $\K'$ such that some face $g$ with $\dist(g,f_0) > \dist(f,f_0)$ has $\K_g > \az(n)_g$.

Repeat this process until the violating face $h$ has $\dist(f_0,h) > n$, so that  $\az(n)_h = 0$; note that this is always possible by Lemma \ref{lemma:free-to-fire}. Call this configuration $\K'$. Since $\K'_h > 0$, no firing moves on $\K'$ can ever decrease the weight of face of $h$, and so firing on $\K'$ will result in a terminal configuration $\K^*$ with $\K^{*}_h > 0$. Hence $\K^* \neq \az(n)$. 
\end{proof}

\begin{example}\label{ex: secondregimeproof}
Below we draw four configurations.
The first (a) is $\K(3,2)$, and the other three are obtained from the first by successive firing moves.
Using the notation of Lemma \ref{lem:flooding-lemma}, for each of the configurations $\K$ below, pick $f$ to be the northern-most nonzero face that can fire in $\K$ (shaded, in pink), and let $g$ be the neighboring face (shaded, in yellow).
Using these choices of $f$ and $g$, we show a sequence of three iterations of the Flooding Lemma.
\[
\scalebox{0.9}{
\begin{ytableau}
\textcolor{white}{.} &   &  &  &  \\
 & *(yellow!40!white)  & *(magenta!20!white)3 &  &  \\
 &   3 &  3 & 3 &   \\
 3 &  3 & *(teal!40) 3 &  3 &  3 \\
  &  3 &  3 &  3 &   \\
  & &  3 & &  \\
\end{ytableau}
\hskip0.5cm
\begin{ytableau}
\textcolor{white}{.} &   &  &  &  \\
 & *(yellow!40!white) 1 &  2 &  &  \\
 & *(magenta!20!white) 3 &  3 & 3 &   \\
 3 &  3 & *(teal!40) 3 &  3 &   \\
  &  3 &  3 &  3 &   \\
  & &  3 & &  \\
\end{ytableau}
\hskip0.5cm
\begin{ytableau}
\textcolor{white}{.} &  *(yellow!40!white)  &  &  &  \\
 & *(magenta!20!white) 2 & 2 &  &  \\
 &  2 &  3 & 3 &  \\
 3 &  3 & *(teal!40) 3 &  3 &  3 \\
  &  3 &  3 &  3 &   \\
  & &  3 & &  \\
\end{ytableau}
\hskip0.5cm
\begin{ytableau}
\textcolor{white}{.} &  *(magenta!20!white)  1 &  &  &  \\
& 1 & 2 &  &  \\
 &  2 &  3 & 3 &   \\
 3 &  3 & *(teal!40) 3 &  3 &  3 \\
  &  3 &  3 &  3 &   \\
  & &  3 & &  \\
\end{ytableau}}
\]
\begin{center}
(a) \hspace{2.3cm} (b) \hspace{2.3cm} (c) \hspace{2.3cm} (d)
\end{center}
\end{example}

\subsection{Second regime: termination in the Aztec diamond}\label{sec:3-regimes-2-aztec}
Having constructed a stable configuration from $\K(n,r)$ which is not equal to $\az(n)$, we now wish to show that $\az(n)$ \emph{can} appear as a final configuration for $\K(n,r)$ when $ 2 \leq r \leq \lceil \frac{n}{2} \rceil $. Our goal is to show:

\begin{reptheorem}{thm:3regimes}[Part \ref{3regime2-a}]
Given a configuration $\K(n,r)$ with $2 \leq r\leq \lceil\frac{n}{2}\rceil$, there is a stable configuration $\K^*$ obtained from firing on $\K(n,r)$ such that $\K^* = \az(n)$.
\end{reptheorem}

\begin{proof}
Divide $\K(n,r)$ and $\az(n)$ into four quadrants, as shown below in the case of $\K(4,2)$. Each quadrant is given in a different color, and $\az(4)$ is shaded in gray in the background. Note that the marked face $f_0$ is not included in any of the quadrants.
\[
\scalebox{0.8}{
\begin{ytableau}
\textcolor{white}{.} &  & &  &  *(black!20!white)  &    &  & & \\
 & &    &  *(black!20!white) & *(black!20!white) & *(black!20!white)    &  &   & \\
 &   &*(black!20!white) & *(black!20!white)  & *(violet!40!white) 4 & *(black!20!white)  & *(black!20!white)  &   & \\
  &  *(black!20!white)  &*(black!20!white) &  *(yellow!20!white) 4 &  *(violet!40!white) 4 & *(violet!40!white) 4 &  *(black!20!white)  &   *(black!20!white) & \\
 *(black!20!white)  &  *(black!20!white)  &*(yellow!20!white) 4 & *(yellow!20!white) 4 & *(teal!85!white) 4 & *(red!30!white) 4 &  *(red!30!white) 4  &   *(black!20!white) & *(black!20!white) \\
  &   *(black!20!white) & *(black!20!white) &  *(green!20!white)4 &  *(green!20!white)4 &  *(red!30!white)4 &   *(black!20!white)  &  *(black!20!white)  &\\
  &   & *(black!20!white) &*(black!20!white)  &  *(green!20!white) 4 & *(black!20!white) &  *(black!20!white)  &   &\\
& &    & *(black!20!white) & *(black!20!white)&*(black!20!white)    &  &  &  \\
& &    &  &*(black!20!white) &    &  & &  \\
\end{ytableau}}
\]

Without loss of generality, we can study a single quadrant and then appeal to symmetry to describe the remaining quadrants.  Consider the rows in the first quadrant of $\K(n,r)$ (colored violet in the figure above) and count them from top to bottom. Let $\PPP (\ell)$ be the path that is obtained from extending the $\ell^{\text{th}}$ row of the quadrant to the boundary of $\az(n)$. The relevant configuration on $\PPP(\ell)$ is then 
\[ \R(\ell):= 
\underbrace{(n,n, \cdots, n}_{\ell~\textrm{times}},0, \dots)\,.
\]
whose behavior was studied in Section \ref{sec:pathFire}. 

Note that $\dist(f_0, f_i)= r-\ell+i$ for each face $f_i$ of $\PPP (\ell)$ and $$\az(n)_{f_i} = n- \dist(f_0,f_i)+1 = n-r+\ell-i+1.$$ 
Let $\R^{\circ}$ be the unique path-stable configuration obtained from path-firing on $\R (\ell)$ given in Corollary \ref{lem:every-face-fires}. Translating the discrete derivative in Corollary \ref{lem:every-face-fires} into the language of flow-firing, we have that $\R^{\circ}_i$, the value of face $f_i$ on $\R^{\circ}$, is 
\[ \R^{\circ}_i = \begin{cases} 
\ell + \lfloor n/2 \rfloor - i & \text{if }n \textrm{ even and } i < \ell, \\
\ell + \lfloor n/2 \rfloor - i + 1 &  \textrm{ else}.
\end{cases}\]
Since $r \leq \lceil n/2 \rceil$,
it follows that 
\[ \az(n)_{f_i} \geq \lfloor n/2 \rfloor + \ell - i + 1 \geq  \R^{\circ}_i. \]

Thus, the value of $\R^{\circ}_i$ cannot exceed $\az(n)_{f_i}$ along $\PPP (\ell)$. Firing $\K(n,r)$ along all rows in the analogous way thus yields a configuration which does not violate $\az(n)$. Thus by Lemma \ref{lem:sub-configurations}, there is a sequence of firing moves that can then be implemented to obtain $\az(n)$ as a stable configuration. 
\end{proof}

\begin{example}\label{ex:regime2az}

The full configurations after path-firing in all four quandrants is shown below. 
Note that the second configuration does not violate the Aztec diamond $\az(4)$.
This means that we can apply Lemma \ref{lem:sub-configurations} to obtain $\az(4)$ from the second by firing moves.
\[
\scalebox{0.8}{
\begin{ytableau}
\textcolor{white}{.} &  & &  &  *(black!20!white)  &    &  & & \\
 & &    &  *(black!20!white) & *(black!20!white) & *(black!20!white)    &  &   & \\
 &   &*(black!20!white) & *(black!20!white)  & *(violet!40!white) 4 & *(black!20!white)  & *(black!20!white)  &   & \\
  &  *(black!20!white)  &*(black!20!white) &  *(yellow!20!white) 4 &  *(violet!40!white) 4 & *(violet!40!white) 4 &  *(black!20!white)  &   *(black!20!white) & \\
 *(black!20!white)  &  *(black!20!white)  &*(yellow!20!white) 4 & *(yellow!20!white) 4 & *(teal!85!white) 4 & *(red!30!white) 4 &  *(red!30!white) 4  &   *(black!20!white) & *(black!20!white) \\
  &   *(black!20!white) & *(black!20!white) &  *(green!20!white)4 &  *(green!20!white)4 &  *(red!30!white)4 &   *(black!20!white)  &  *(black!20!white)  &\\
  &   & *(black!20!white) &*(black!20!white)  &  *(green!20!white) 4 & *(black!20!white) &  *(black!20!white)  &   &\\
& &    & *(black!20!white) & *(black!20!white)&*(black!20!white)    &  &  &  \\
& &    &  &*(black!20!white) &    &  & &  \\
\end{ytableau}}
\hskip1cm
\scalebox{0.8}{
\begin{ytableau}
\textcolor{white}{.} &  & &  &  *(black!20!white)  &    &  & & \\
 & &    &  *(black!20!white) & *(black!20!white) & *(black!20!white)    &  &   & \\
 &  &   *(black!20!white) &  *(black!20!white)& *(teal!40!white)2 & *(teal!40!white)1 &  *(teal!40!white)1&  & \\
&  *(black!20!white)1 &   *(black!20!white)1 & *(black!20!white)2 & *(violet!40!white)3 &  *(violet!40!white)2 &  *(violet!40!white)2& *(violet!40!white)1 &  \\
*(black!20!white)1 & *(black!20!white) 2 &   *(black!20!white)2 &  *(black!20!white)3& *(teal!85!white)  4&  *(black!20!white)3  &*(black!20!white)2  & *(black!20!white) 2  & *(black!20!white) 1\\
& *(black!20!white)1&   *(black!20!white)2 & *(black!20!white)2 & *(black!20!white)3& *(black!20!white)2   &*(black!20!white)1  & *(black!20!white) 1 & \\
& &  *(black!20!white) 1 &*(black!20!white) 1 & *(black!20!white)2& *(black!20!white)  & *(black!20!white) &  &  \\
& &    & *(black!20!white) & *(black!20!white)&*(black!20!white)    &  &  &  \\
& &    &  &*(black!20!white) &    &  & &  \\
\end{ytableau}}
\]
\end{example}


\section{The Third Regime}\label{sec:regime_3}
Finally, we turn to the third regime. We first show that when $r > \lceil n/2 \rceil$, $\K(n,r)$ cannot terminate uniquely in Section \ref{regime_3:nonuniqueterminate}. Then, we show that for $r > \lceil \frac{n}{\sqrt{3}} \rceil +1$, it is impossible for $\K(n,r)$ to terminate in the Aztec diamond in Section \ref{sec:thirdregimeaztermination}.

\subsection{Non-unique termination}\label{regime_3:nonuniqueterminate}

We will show that $\K(n,r)$ does not terminate uniquely for $r > \lceil n/2 \rceil$. Our proof relies on decomposing the grid in two distinct ways, shown below:

        \begin{figure}[H]
    \centering
  \scalebox{0.8}{    \includegraphics[width=0.5\textwidth]{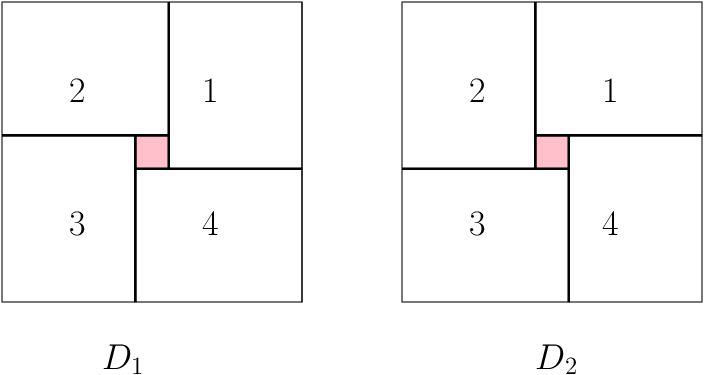}}
    \caption{Two distinct ways to decompose the grid into quadrants.}
    \label{fig:quad}
\end{figure}
\vspace{-0.5cm}
For $i=1,2$, we refer to $D_i$ as the decomposition of the grid. Our argument introduces an algorithm that fires identically within each quadrant, but produces two different final cofigurations using the two decompositions in Figure \ref{fig:quad}.

\begin{reptheorem}{thm:3regimes}[Part \ref{3regime3-a}]
If $r > \lceil n/2 \rceil$ then $\K(n,r)$ does not terminate uniquely.
\end{reptheorem}
\begin{proof}It is immediate that $\K(2,r)$ does not terminate uniquely when $r\geq 1$. So, suppose $n>2$. For the sake of contradiction, assume $\K(n,r)$ terminates uniquely. 

Consider the following flow-firing algorithm on $\K(n,r)$:
\begin{enumerate}
    \item Divide $\K(n,r)$ into four quadrants as in either $D_1$ or $D_2$ as in Figure \ref{fig:quad};
    \item While possible, path-fire within each quadrant along all horizontal paths (e.g. rows) away from $f_0$;
    \item While possible, path-fire within each quadrant along all vertical paths (e.g. columns) away from $f_0$;
    \item Return to step (2).
\end{enumerate}

We claim that this process results in a stable configuration, regardless of whether one divides quadrants by $D_1$ or $D_2$. By construction, within each quadrant the resulting configuration is stable. Note that any adjacent faces between different quadrants must differ by at most one. This follows from the fact that the quadrants are reflections of one another, with face values shifted exactly one unit up (down) and right (left). Let $\K_i$ be the stable configuration obtained from the above process, with the choice of quadrants given by $D_i$ for $i = 1,2$.  

By assumption, $\K(n,r)$ terminates uniquely and so both the supports and the face weights of $\K_1$ and $\K_2$ are the same.  

We label the face weights of $\K_1$ and $\K_2$ as follows: first, label the faces of the grid by $(j,k) \in \Z^2$ so that $f_{j,k} \in \Z$ is the face weight at face $(j,k)$. Fix the indexing so that the distinguished face is $f_{0,0}$, and $(j,k)$ increase to the right and up. We will focus on the first quadrant of $\K_i$, denoted by $Q_i$.  Label the face weights in both $Q_i$ as $a_{j,k}^{(i)} \in \Z^{2}_{> 0}$, where the bottom-left face is $a_{1,1}^{(i)}$ for $i = 1,2$ and again indices increase right and up.

\begin{figure}[H]
\centering
\scalebox{0.95}{\begin{subfigure}{.33\textwidth}
  \centering
    \includegraphics[width=0.96\linewidth]{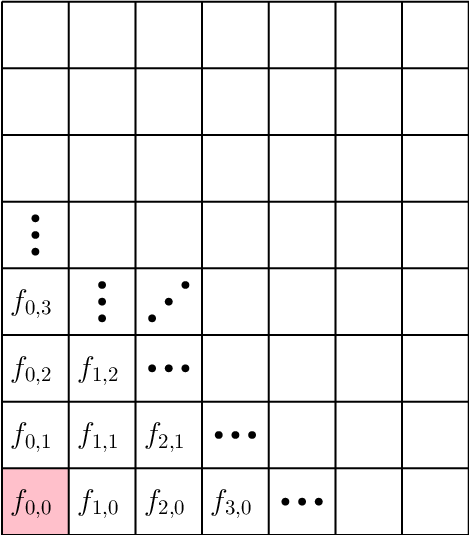}
  \label{fig:D1_face_weights}
\end{subfigure}}
\scalebox{0.95}{\begin{subfigure}{.33\textwidth}
  \centering
 \includegraphics[width=0.96\linewidth]{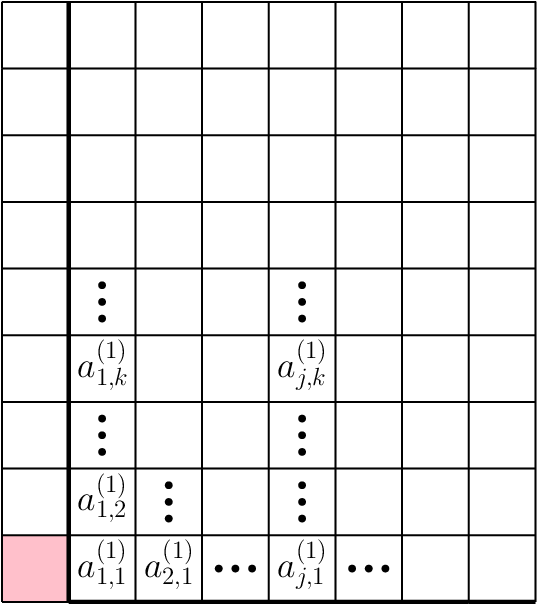}
\end{subfigure}}
\scalebox{0.95}{\begin{subfigure}{.33\textwidth}
  \centering
  \includegraphics[width=.84\linewidth]{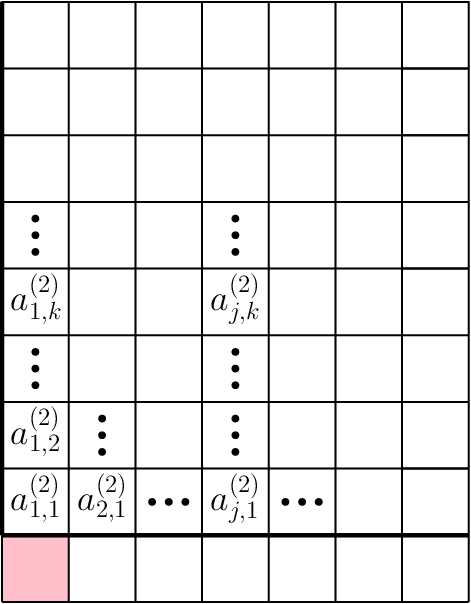}
  \label{fig:D2_weights}
\end{subfigure}}
\caption{The face labels of the grid, face weights in $Q_1$ and $Q_2$ (respectively)}
\label{fig:Q1 and Q2}
\end{figure}

 We make three observations:
    \begin{itemize}
        \item because the configurations $Q_1$ and $Q_2$ were obtained by the same firing process, $a_{j,k}^{(1)} =a_{j,k}^{(2)}$ for all $j,k \in \Z^2_{>0}$. In other words, $Q_2$ is obtained by shifting $Q_1$ up and left by 1; 
        \item by the definitions of $D_1$ and $D_2$, we have $a_{1,1}^{(1)}$ is the face weight of $f_{1,0}^{(1)}$ in $\K_1$, while  $a_{1,1}^{(2)}$ is the face weight of $f_{0,1}^{(2)}$ in $\K_2$; 
        \item  by the assumption that $\K_1 = \K_2$, we have $a_{j,k}^{(1)} = a_{j+1,k-1}^{(2)}$ for all $j,k$. Therefore  $a_{j,k}^{(1)} = a_{j-1,k+1}^{(1)}$ for all $j,k$.
    \end{itemize}
Label the columns of $Q_i$ by $C_1, C_2, \ldots, C_t$ (from left to right), and let the length of $C_j$ be $n_j$. Then by the last observation, we must have that $n_{j+1} = n_{j} - 1$ for $j=1,\ldots, t-1$. Thus, without loss of generality, we may describe $Q_1$ as in Figure \ref{fig:case3a} below:

    \begin{figure}[H]
    \centering
    \scalebox{0.95}{  \includegraphics[width=0.35\textwidth]{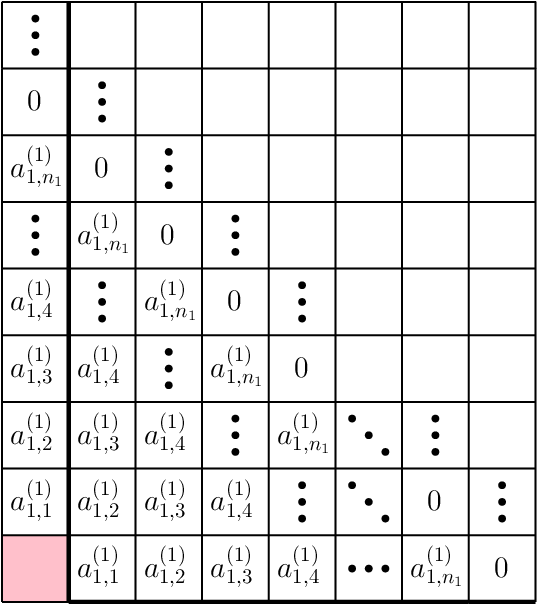}}
 \caption{The face weights in $Q_1$}
    \label{fig:case3a}
\end{figure}

From now on, we focus on $Q_1$. For ease of notation, let 
$a_{1,j} :=  a_{1,j}^{(1)} $ for $1\leq j \leq n_1$.  Note that  $a_{1,n_1}=1$ and $n_1>r$.

Write $r= \lceil n/2 \rceil+k$ for some $k\geq 1$. Since $n>2$, we have $r\geq k+2$.   Consider the faces $f_{1,0}, \cdots, f_{1,n_1-1}$ in the first column of $Q_1$ which have face value $a_{1,1}, \cdots, a_{1,n_1}$.

It follows from Corollary \ref{lem:every-face-fires} and the firing rules to obtain $Q_1$ that faces $f_{1,k+1}, \ldots, f_{1,n_1-2}$ all fired but faces $f_{1,0},\ldots, f_{1,k-1}$ may or may not have fired  in this process to obtain $Q_1$. Thus, $a_{1,i} \leq n-1$ for all $k+1\leq i\leq r+1$. In addition, if $a_{1,\ell}=n$ (in other words, $f_{1,\ell}$ never fired), then $k\geq \ell$.

The last observation we make is that the total face weight in $Q_1$ is
\begin{equation}\label{eq:facecount} \sum_{i,j} a_{i,j} = \frac{n r (r+1)}{2} \end{equation}
since face weight is preserved in the firing process to obtain $Q_1$.

Supposing the above conditions are satisfied, 
since $a_{1,r+1}\leq n-1$, there are three cases to consider. We will show that each case yields a contradiction, so that $\K(n,r)$ cannot terminate uniquely.

\textbf{Case 1:}  Suppose $a_{1,r+1}=n-1$. Then either $a_{1,r}=n$ or $a_{1,r-1}=n$ by Lemma \ref{lem:adjacent-triple}. This means either $k\geq r$ or $k \geq r-1$. Since $r\geq k+2$, neither situation is possible. Thus it is impossible that $\K_1 = \K_2$, and so $\K(n,r)$ cannot terminate uniquely in this case.

\textbf{Case 2:} Suppose $a_{1,r+1}=n-2$. Then, as in the previous case, either $a_{1,r-1}=n$ or $a_{1,r-2}=n$ by Lemma \ref{lem:adjacent-triple}. As we have seen in the previous case, $a_{1,r-1}=n$ is not possible.  Note that when $a_{1,r-2}=n$, we have either 
\begin{enumerate}
    \item[(i)] $(a_{1,r+1}, a_{1,r}, a_{1,r-1}, a_{1,r-2,})= (n-2, n-2, n-1, n)$ or
    \item[(ii)] $(a_{1,r+1}, a_{1,r}, a_{1,r-1}, a_{1,r-2})= (n-2, n-1, n-1, n)$.
\end{enumerate}
If $a_{1,r-2}=n$, then $k\geq r-2$ which implies that $r=k+2$. This happens only when $n=3$ or $n=4$.  By counting the total number of chips in $Q_1$ for $n=3$ and total face weight count given in \eqref{eq:facecount}, we get either
\begin{enumerate}
    \item[(i)] $\frac{3}{2} k(k+1)+4k+7= \frac{3}{2} (k+2)(k+3)$ or
    \item[(ii)] $\frac{3}{2} k(k+1)+5k+9= \frac{3}{2} (k+2)(k+3)$
\end{enumerate}
neither of which have non-negative integer solutions. Similarly, for $n=4$, we get
\begin{enumerate}
    \item[(i)] $2 k(k+1)+8k+17=2 (k+2)(k+3)$ or
    \item[(ii)] $2 k(k+1)+9k+97=2 (k+2)(k+3)$
\end{enumerate}
neither of which have non-negative integer solutions. Hence, this case is also not possible.

\textbf{Case 3:} Suppose $a_{1,r+1}\leq n-3$. 

We will now construct a third terminal configuration $\K_3$ from $\K(n,r)$, and show that it is impossible for $\K_1 = \K_2 = \K_3$.

To construct $\K_3$, consider the intermediate configuration which is obtained from $\K(n,r)$ by applying the algorithm described at the beginning of the proof, but only in the first quarter of $D_1$. This yields the configuration $Q_1$ in the first quadrant and $\K(n,r)$ in the other three quadrants.

Recall that $r > \lceil n/2 \rceil$, and consider the $(r+1)^{\text{th}}$ row of $Q_1$ (counted from south to north). Denote the configuration  on this row  as $\R$ where
\[ \R:= (a_{1,r+1}, a_{1,r+2}, \cdots, a_{1,n_1}, 0, \cdots).\]
Note that $f_{n_1+1,r}$ is the first face with weight 0 in $\R$.

Let $\tilde{\R}$ be the extension of $\R$ outside of $Q_1$ one unit to the left, i.e. by including the face $f_{0,r}$; this new face has weight $n$, since no faces in the fourth (i.e. north-west) quadrant have been fired yet:
\[ \tilde{\R}:= (n, a_{1,r+1}, a_{1,r+2}, \cdots, a_{1,n_1}, 0, \cdots).\]

Now path-fire on $\tilde{\R}$ along this row. Since $a_{1,r+1} \leq n-3$, we see that by applying Lemma \ref{lem:adjacent-triple}, after path-firing on $\tilde{\R}$ the face weight of $f_{n_1+1,r}$ is non-zero. Call this intermediate configuration $\tilde{\K}$.

Finally, we can obtain a stable configuration $\K_3$ from $\tilde{\K}$ by \cite[Theorem 4]{flowfiringprocesses}. Importantly, in $\K_3$ the face weight of $f_{n_1+1,r}$ is non-zero, because it is non-zero in $\tilde{\K}$. On the other hand, the weight of $f_{n_1+1,r}$ is zero in $\K_1$. Hence $\K_1 \neq \K_3$, and so $\K(n,r)$ cannot terminate uniquely.
\end{proof}

\begin{example}\label{ex:non-unique}
     In the case of $\K(3,3)$, the figures below show two distinct final configurations. We decompose the grid as in Figure \ref{fig:quad}, and perform firings as in the proof of Theorem \ref{thm:3regimes}(\ref{3regime3-a}).

\[
\scalebox{0.8}{
\begin{ytableau}
\textcolor{white}{.} &  & &  &  *(violet!40!white) 1 &    &  & & \\
 & &    &  *(yellow!20!white) 1& *(violet!40!white)2& *(violet!40!white)  1 &  &   & \\
 &  &   *(yellow!20!white)1 &  *(yellow!20!white)2& *(violet!40!white)2 & *(violet!40!white)2 &  *(violet!40!white)1&  & \\
&  *(yellow!20!white) 1&   *(yellow!20!white)2 & *(yellow!20!white)2 & *(violet!40!white)3 &  *(violet!40!white)3 &  *(violet!40!white)2& *(violet!40!white)1 &  \\
*(yellow!20!white) 1& *(yellow!20!white) 2&   *(yellow!20!white)3 &  *(yellow!20!white)3& *(teal!85!white)  3&  *(red!30!white)3  & *(red!30!white)3  & *(red!30!white)2 & *(red!30!white)1\\
& *(green!20!white)1&   *(green!20!white)2 & *(green!20!white)3 & *(green!20!white)3& *(red!30!white)2   &*(red!30!white)2  & *(red!30!white)1 & \\
& &  *(green!20!white) 1 &*(green!20!white) 2 & *(green!20!white)2& *(red!30!white)2  & *(red!30!white)1 &  &  \\
& &    & *(green!20!white) 1& *(green!20!white)2&*(red!30!white) 1   &  &  &  \\
& &    &  &*(green!20!white) 1&    &  & &  \\
\end{ytableau}
\hskip2cm
\begin{ytableau}
\textcolor{white}{.} &  & &  &  *(yellow!20!white) 1 &    &  & & \\
 & &    &  *(yellow!20!white) 1& *(yellow!20!white) 2& *(violet!40!white)  1 &  &   & \\
 &  &   *(yellow!20!white)1 &  *(yellow!20!white)2& *(yellow!20!white)2 & *(violet!40!white)2 &  *(violet!40!white)1&  & \\
&  *(yellow!20!white) 1&   *(yellow!20!white)2 & *(yellow!20!white)3 & *(yellow!20!white)3 &  *(violet!40!white)2 &  *(violet!40!white)2& *(violet!40!white)1 &  \\
*(green!20!white) 1& *(green!20!white) 2&   *(green!20!white)3 &  *(green!20!white)3& *(teal!85!white)  3&  *(violet!40!white) 3  & *(violet!40!white) 3  & *(violet!40!white) 2 & *(violet!40!white) 1\\
& *(green!20!white)1&   *(green!20!white)2 & *(green!20!white)2 & *(red!30!white)3& *(red!30!white)3   &*(red!30!white)2  & *(red!30!white)1 & \\
& &  *(green!20!white) 1 &*(green!20!white) 2 & *(red!30!white)2& *(red!30!white)2  & *(red!30!white)1 &  &  \\
& &    & *(green!20!white) 1& *(red!30!white)2&*(red!30!white) 1   &  &  &  \\
& &    &  &*(red!30!white) 1&    &  & &  \\
\end{ytableau}}
\]
\end{example}

\subsection{Regime 3: Termination outside of the Aztec diamond}\label{sec:thirdregimeaztermination}
Henceforth, let $N=\lceil \frac{n}{\sqrt{3}} \rceil +1$.

\begin{reptheorem}{thm:3regimes}[Part \ref{3regime3-b}]
If $r \geq N$, then  $\K(n,r)$ cannot terminate in the Aztec diamond $\az(n)$.
\end{reptheorem}

\begin{rem} In fact, Theorem \ref{thm:3regimes}(\ref{3regime3-b}) holds for any closed curve of weight $n$ containing $\K(N,r)$.
\end{rem}

\begin{proof}
It is sufficient to prove that $\weight(\K(n,r)) > \weight(\az(n))$.  Let $\K$ denote a single quadrant of the Aztec diamond. The weight of $\K$ is
\begin{align*} 
\weight (\K) =(n + n-1 + \cdots + 1) + (n-1 + n-2 + \cdots + 1) + \cdots + 1= \frac{n(n+1)(n+2)}{6}.
\end{align*}
Thus summing over all four quadrants \emph{and} $f_0$, we obtain
\[
\weight(\az(n)) = n+4 \left(\frac{n(n+1)(n+2)}{6} \right) = n+ \frac{2}{3}(n)(n+1)(n+2)\,.
\]
Let $\K'$ denote a single quadrant of  $\K(n,r)$. The weight of $\K'$ is
\[
\weight (\K')= nr + n(r-1) + \cdots 2n + n =  n\cdot \frac{r(r+1)}{2}
\]
where the first expression comes from adding the total number of chips in each row of the quadrant. Thus by similar logic, summing over all quadrants and $f_0$ gives
\[ \weight(\K(n,r)) = n + 4  n\cdot \frac{r(r+1)}{2}. \]
Suppose $r \geq \lceil \frac{n}{\sqrt{3}} \rceil +1$, and note that for $n$ a positive integer, 
$ \lceil n/\sqrt{3} \rceil > n/\sqrt{3}. $ Thus 
\begin{align*}
    \weight(\K(n,r)) & >  \frac{2}{3}\left( n^3 + 3\sqrt{3}n^2 + 6n \right) + n> \frac{2}{3} \left( n^3 + 3n^2 + 2n \right) + n = \weight(\az(n)).  \qedhere
\end{align*} 
\end{proof}

\subsection{Bound improvement}\label{sec:closing}
Theorem \ref{thm:3regimes}(\ref{3regime3-b}) naturally raises the question of whether it is possible for $\K(n,r)$ to terminate in the Aztec diamond when $ \lceil \frac{n}{\sqrt{3}} \rceil +1> r >\lceil n/2 \rceil$. In the table below, we present the values of $r =  \lceil \frac{n}{\sqrt{3}} \rceil +1$ and $ \lceil n/2 \rceil $ for $3 \leq n \leq 24$, along with the minimum value of $r$ such that $\weight (\K(n,r))> \weight (\az(n))$.

\begin{center}
\scalebox{0.85}{
\begin{tabular}{|c |c| c| c| c| c| c| c| c |c |c |c| c| c| c| c| c |c |c |c |c |c |c |c  |c||}
 \hline
\textbf{n}& \textbf{3}& \textbf{4} &\textbf{5} & \textbf{6}& \textbf{7}& \textbf{8} & \textbf{9} & \textbf{10} & \textbf{11}& \textbf{12} & \textbf{13} & \textbf{14} & \textbf{15} & \textbf{16} & \textbf{17} & \textbf{18} & \textbf{19} & \textbf{20} & \textbf{21} & \textbf{22} & \textbf{23} & \textbf{24}  \\
 \hline
$\mathbf{\lceil \frac{n}{2} \rceil}$& 2& 2 &3 & 3& 4& 4&5&5&6&6&7&7& 8 &8 &9 &9 &10& 10 &11 & 11 & 12 & 12  \\
 \hline
\textbf{Minimum r}& 2& 3 &4 & 4& 5& 6&6&7&7&8&8&9& 10 &10 &11 &11 & 12&  12&  13& 14&  14&  15\\
 \hline
$\mathbf{\lceil \frac{n}{\sqrt{3}} \rceil +1}$ & 3& 4&4 &5 & 6& 6&7&7&8&8&9&10&10  &11 &11 &12 &12&13 & 14& 14& 15& 15\\
 \hline
\end{tabular}}

\end{center}

The minimum value of $r$ for which $\weight(\K(n,r))> \weight(\az(n))$ is precisely  $\lceil \frac{n}{\sqrt{3}} \rceil +1$ approximately one third of the time. Thus, in these cases, the bound for Theorem \ref{thm:3regimes}(\ref{3regime3-b}) cannot be improved unless a  different approach is taken. On the other hand, for $n > 8$, there is a gap between $\lceil \frac{n}{2} \rceil$ and the minimum value of $r$. This suggests finding a better upper bound for Regime 2 may be more viable. Below are examples that show improving the bounds for Regime 2 and Regime 3 may be quite subtle. 

\begin{enumerate}
\item {\bf It is possible to find configurations $\K(n,r)$ with $ \lceil \frac{n}{2} \rceil < r < \lceil \frac{n}{\sqrt{3}} \rceil +1$ that can terminate in $\az(n)$.} Consider the case $n=8$; the only ball contained in the region between Regimes 2 and 3 is $\K(8,5)$. First observe that $ \weight(\K(8,5))=\weight(\az(8))$. For this initial configuration, there is a sequence of firing moves that terminate at $\az(8)$. 

\item {\bf It is possible to find configurations $\K(n,r)$ with $ \lceil \frac{n}{2} \rceil < r < \lceil \frac{n}{\sqrt{3}} \rceil +1$ that cannot terminate in $\az(n)$.} Consider the case $n=4$; the  only ball contained in the region between Regimes 2 and 3 is $\K(4,3)$. As one can see from the table above, $ \weight(\K(4,3))>\weight(\az(4))$. So, it is not possible for $\K(4,3)$ to terminate at $\az(4)$.

\item {\bf It is possible to find configurations within $\K(n,r)$ that can terminate in $\az(n)$, even if $\K(n,r)$  cannot.} Suppose that $n = 4$. Consider the configuration  on the left in the figure which is contained in $\K(4,3)$:

\[
\scalebox{0.8}{\begin{ytableau}
\textcolor{white}{.} &  & & *(blue!40!white)  4 &   & &      \\
 &  & *(blue!40!white)  4 & *(blue!40!white)  4& *(blue!40!white) 4 & &    \\
  & *(black!20!white) & *(blue!40!white) 4 & *(blue!40!white)  4 &*(blue!40!white)  4 & *(black!20!white) &    \\
   *(blue!40!white)  4 & *(blue!40!white)  4 & *(blue!40!white)  4 & *(teal!50!white) 4 & *(blue!40!white)  4 & *(blue!40!white)  4 & *(blue!40!white)  4    \\
  & *(black!20!white) &*(blue!40!white) 4 & *(blue!40!white) 4 &*(blue!40!white)  4 & *(black!20!white)&   \\
 & &*(blue!40!white)  4 &*(blue!40!white)  4 & *(blue!40!white)  4 & &   \\
 & & & *(blue!40!white) 4 & & &    \\
\end{ytableau}}
\hskip2cm
\scalebox{0.8}{
\begin{ytableau}
\textcolor{white}{.}  &  & & *(blue!40!white)  4 &   & &      \\
 &  & *(blue!40!white)  4 & *(blue!40!white)  4& *(blue!40!white) 4 & &    \\
 & *(blue!40!white) 4 & *(blue!40!white) 4 & *(blue!40!white)  4 &*(blue!40!white)  4 & *(blue!40!white) 4&   \\
   *(blue!40!white)  4 & *(blue!40!white)  4 & *(blue!40!white)  4 & *(teal!50!white) 4 & *(blue!40!white)  4 & *(blue!40!white)  4 & *(blue!40!white)  4    \\
  & *(black!20!white) &*(blue!40!white) 4 & *(blue!40!white) 4 &*(blue!40!white)  4 & *(black!20!white)&    \\
& &*(black!20!white)  &*(blue!40!white)  4 & *(black!20!white)  & &   \\
&& & *(blue!40!white) 4 & & &    \\
\end{ytableau}}
\]

\noindent Note that $\weight (\K) = \weight (\az(4))$. For this initial configuration $\K$, one can find a sequence of firing moves that terminate at the Aztec diamond. 

\item {\bf It is possible to find a configuration $\K$ with $\weight(\K) = \weight(\az(n))$ that cannot terminate in $\az(n)$.} Again, take $n=4$ and consider the configuration on the right in the figure above. This configuration \emph{cannot} terminate in $\az(4)$.
\end{enumerate}

\bibliographystyle{plain}
\bibliography{refs}
\end{document}